\documentclass[11pt]{amsart}

\usepackage{amssymb,booktabs,caption,cite,enumitem,mathtools}
\usepackage[colorlinks=true,allcolors=blue]{hyperref}
\usepackage[margin=1.5in]{geometry}

\makeatletter
\newlength{\@thlabel@width}%
\newcommand{\thmenumhspace}{\settowidth{\@thlabel@width}{\itshape1.}\sbox{\@labels}{\unhbox\@labels\hspace{\dimexpr-\leftmargin+\labelindent/5+\labelsep+\@thlabel@width-\itemindent}}}
\makeatother

\newtheorem{theorem}{Theorem}
\newtheorem{lemma}[theorem]{Lemma}
\newtheorem{proposition}[theorem]{Proposition}

\DeclareMathOperator{\lcm}{lcm}

\title{On Arithmetical Structures on Complete Graphs}

\author{Zachary Harris}
\address[Z.~Harris]{Department of Mathematics, Niagara University, Niagara University, NY 14109, USA}
\email{zharris.edu@gmail.com}

\author{Joel Louwsma}
\address[J.~Louwsma]{Department of Mathematics, Niagara University, Niagara University, NY 14109, USA}
\email{jlouwsma@niagara.edu}

\begin{document}

\begin{abstract}
An arithmetical structure on the complete graph~$K_n$ with $n$~vertices is given by a collection of $n$~positive integers with no common factor each of which divides their sum. We show that, for all positive integers~$c$ less than a certain bound depending on~$n$, there is an arithmetical structure on~$K_n$ with largest value~$c$. We also show that, if each prime factor of~$c$ is greater than $(n+1)^2/4$, there is no arithmetical structure on~$K_n$ with largest value~$c$. We apply these results to study which prime numbers can occur as the largest value of an arithmetical structure on~$K_n$.
\end{abstract}

\maketitle

\section{Introduction}

How can one have a collection of positive integers, with no common factor, each of which divides their sum? For example, $105$, $70$, $15$, $14$, and~$6$ sum to $210$, which is divisible by each of these numbers. Introducing notation, we seek positive integers $r_1,r_2,\dotsc,r_n$ with no common factor such that 
\begin{equation}\label{eq:arith}
r_j\,\,\Big|\,\sum_{i=1}^nr_i\quad\text{for all~$j$.}
\end{equation}
It is well known that finding such~$r_i$ is equivalent to finding positive integer solutions of the Diophantine equation 
\begin{equation}\label{eq:dio}
\frac{1}{x_1}+\frac{1}{x_2}+\dotsb+\frac{1}{x_n}=1.
\end{equation}
Indeed, given $r_1,r_2,\dotsc,r_n$ satisfying~\eqref{eq:arith}, dividing both sides of the equation $r_1+r_2+\dotsb+r_n=\sum_{i=1}^n r_i$ by $\sum_{i=1}^n r_i$ gives a solution to~\eqref{eq:dio}, and, given a solution of~\eqref{eq:dio}, the numbers $\lcm(x_1,x_2,\dotsc,x_n)/x_i$ satisfy~\eqref{eq:arith} and have no common factor.

Our interest in this question stems from an interest in arithmetical structures. An \emph{arithmetical structure} on a finite, connected graph is an assignment of positive integers to the vertices such that:
\begin{enumerate}[label=\textup{(\alph*)},ref=\textup{\alph*}]
\item At each vertex, the integer there is a divisor of the sum of the integers at adjacent vertices (counted with multiplicity if the graph is not simple).
\item The integers used have no nontrivial common factor.
\end{enumerate}
Arithmetical structures were introduced by Lorenzini~\cite{L89} to study intersections of degenerating curves in algebraic geometry. The usual definition, easily seen to be equivalent to the one given here, is formulated in terms of matrices. From that perspective, an arithmetical structure may be regarded as a generalization of the \emph{Laplacian matrix}, which encodes many important properties of a graph. Notions in this direction that have received a significant amount of attention include the sandpile group and the chip-firing game; for details, see \cite{CP,K18,GK}.

On the \emph{complete graph}~$K_n$ with $n$~vertices, positive integers $r_1,r_2,\dotsc,r_n$ with no common factor give an arithmetical structure if and only if
\[
r_j\,\,\Big|\,\sum_{\substack{i=1\\i\neq j}}^nr_i\quad\text{for all~$j$};
\]
it is immediate that this condition is equivalent to~\eqref{eq:arith}. Therefore, in this language, the opening question of this paper seeks arithmetical structures on complete graphs. 

Lorenzini \cite[Lemma~1.6]{L89} shows that there are finitely many arithmetical structures on any finite, connected graph, but his result does not give a bound on the number of structures. Several recent papers \cite{B18,A20,GW} count arithmetical structures on various families of graphs, including path graphs, cycle graphs, bidents, and certain path graphs with doubled edges. However, counting arithmetical structures on complete graphs is a difficult problem. The number of arithmetical structures on~$K_n$ for $n\leq8$ is given in~\cite{OEIS}. For general~$n$, bounds have been obtained by several authors \cite{EG,S03,BE,K14} working from the perspective of the Diophantine equation~\eqref{eq:dio}. Other papers such as \cite{B07,B08,ACF} determine, for specific~$n$, the number of solutions of~\eqref{eq:dio} satisfying certain additional conditions on the~$x_i$.

It is conjectured in \cite[Conjecture~6.10]{CV} that, for any connected, simple graph~$G$ with $n$~vertices, the number of arithmetical structures on~$G$ is at most the number of arithmetical structures on~$K_n$. To approach this conjecture, one would like a better understanding of the types of arithmetical structures that occur on complete graphs. In this direction, this paper studies which positive integers can occur as the largest value of an arithmetical structure on~$K_n$. Clearly the~$r_i$ of an arithmetical structure can be permuted; in the following we make the assumption $r_1\geq r_2\geq \dotsb\geq r_n$. We construct arithmetical structures to show that $r_1$ can take certain values and give obstructions to show that it cannot take other values. 

Our primary construction theorem (Theorem~\ref{thm:const}) shows that $r_1$ can take any value up to a certain bound depending on~$n$. More specifically, $r_1$ can be any positive integer less than or equal to $\max_{k\in\mathbb{Z}_{>0}}(2^kn-(k+2^k-2)2^k-1)$. This bound improves somewhat if we restrict attention to prime numbers; $r_1$ can be any prime number less than or equal to $\max_{k\in\mathbb{Z}_{>0}}(2^kn-(k+2^k-3)2^k-3)$. 

We also prove an obstruction theorem (Theorem~\ref{thm:genobstr}) that shows $r_1$ cannot take any value all of whose prime factors are greater than $(n+1)^2/4$. Restricting attention to prime numbers, this bound improves to show that $r_1$ cannot be any prime number greater than $n^2/4+1$ (Theorem~\ref{thm:obstr}).

The final section focuses on the possible prime values $r_1$ can take. We explicitly check prime numbers in the gap between the bound of Theorem~\ref{thm:const} and the bound of Theorem~\ref{thm:obstr}, showing that $r_1$ can take some of these values but not others. In particular, we observe that there can be prime numbers $p_1$ and~$p_2$ with $p_1<p_2$ such that there is an arithmetical structure on~$K_n$ with $r_1=p_2$ but no arithmetical structure on~$K_n$ with $r_1=p_1$. 

\section{Construction}\label{sec:construction}

In this section, we show how to construct arithmetical structures on complete graphs with certain values of~$r_1$. Our main construction theorem is the following. 

\begin{theorem}\label{thm:const}
\begin{enumerate}[label=\textup{(\alph*)},ref=\textup{\alph*}]\thmenumhspace
\item For any positive integer $c\leq \max_{k\in\mathbb{Z}_{>0}}(2^kn-(k+2^k-2)2^k-1)$, there is an arithmetical structure on~$K_n$ with $r_1=c$.
\item For any prime number $p\leq \max_{k\in\mathbb{Z}_{>0}}(2^kn-(k+2^k-3)2^k-3)$, there is an arithmetical structure on~$K_n$ with $r_1=p$.
\end{enumerate}
\end{theorem}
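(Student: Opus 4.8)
The plan is to work directly with the integers $r_1,\dotsc,r_n$ rather than with the reciprocal equation~\eqref{eq:dio}. Writing $S=\sum_i r_i$, an arithmetical structure on $K_n$ with largest value $c$ is exactly a multiset of $n$ positive integers with greatest common divisor $1$, each dividing $S$, whose maximum is $c$. Since $S$ may be any multiple of $c$, I would fix a parameter $k\ge 1$ and take $S=2^kc$; then, provided $2^k\le c$, the numbers $1,2,4,\dotsc,2^k$ and $c$ itself are precisely the divisors of $S$ that are at most $c$, and these are the only part-values the construction will use.

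The heart of the construction is to use $a$ copies of $c$ together with a collection of powers of two drawn from $\{1,2,\dotsc,2^k\}$; for the total to equal $S=2^kc$ these powers of two must sum to $(2^k-a)c$. Choosing $a=2^k-1$ makes the target sum equal to $c$, so the problem reduces to writing $c$ as a sum of exactly $n-a=n-2^k+1$ powers of two, each at most $2^k$. Here I would prove and apply a flexibility lemma: a positive integer $N$ is a sum of exactly $t$ powers of two, each at most $2^k$, if and only if $m(N)\le t\le N$, where $m(N)$ is $\lfloor N/2^k\rfloor$ plus the number of $1$'s in the binary expansion of $N\bmod 2^k$ (the least possible number of parts). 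The nontrivial direction follows by starting from the greedy minimal representation and repeatedly replacing a part $2^j$ with two copies of $2^{j-1}$, each move raising the part count by exactly one until every part equals $1$.

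Applying the lemma with $N=c$ and $t=n-2^k+1$, feasibility becomes the double inequality $m(c)\le n-2^k+1\le c$. The right-hand inequality is just $c\ge n-2^k+1$, and a short estimate shows the left-hand inequality holds for every $c\le 2^kn-(k+2^k-2)2^k-1$, which is exactly the bound in part (a) for this $k$. I would then secure the common-factor condition: for part (a), where $c$ may be even, I ensure a part equal to $1$ appears, splitting a power of two down to $1$ when none is present, and one checks the available slack (at least $k$ parts for the relevant $c$) always suffices. The values $c\le n-1$ lying below the lower endpoint of the $k=1$ interval are handled separately and elementarily, by using some copies of $c$ together with a number of $1$'s chosen to be a multiple of $c$ so that $c\mid S$; this disposes of $1\le c\le n-1$. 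Taking the union over $k\ge 1$ and checking that consecutive intervals overlap — the lower endpoints $n-2^k+1$ decrease in $k$ while the upper endpoints increase up to the optimum — then yields every $c$ up to $\max_{k}\bigl(2^kn-(k+2^k-2)2^k-1\bigr)$, proving part (a).

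For part (b) the only change is the common-factor condition. If $c=p$ is an odd prime, the greatest common divisor of the parts divides $p$, so the presence of any single power-of-two part (which is coprime to $p$) already forces it to be $1$; no part equal to $1$ is required. This lets me drop the small parts forced in part (a) — concretely, the tail $1,2,\dotsc,2^{k-1}$ may omit the part $2$ — saving one part and pushing $c$ higher by exactly $2^k-2$, which is the gain reflected in the improved bound $\max_k\bigl(2^kn-(k+2^k-3)2^k-3\bigr)$. I expect the main obstacle to be the bookkeeping in the feasibility step: verifying $m(c)\le n-2^k+1\le c$ across the full claimed range, arranging the common-factor condition within the available slack in every parity and divisibility case, and confirming that the intervals from the various $k$, together with the small-$c$ construction, tile $[1,M]$ without gaps. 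The flexibility lemma and the choice $S=2^kc$ are what make all of this go through.
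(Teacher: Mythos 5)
Your proposal follows essentially the same route as the paper: the identical construction ($2^k-1$ copies of $c$ together with $n-2^k+1$ powers of two summing to $c$, for a total of $2^kc$), the same elementary treatment of $c\leq n-1$, and a flexibility lemma equivalent to the paper's Lemma~\ref{lem:binary}, merely rephrased via the interval $[m(N),N]$ of achievable part counts. One inessential slip: the numbers $1,2,\dotsc,2^k$ and $c$ are not \emph{precisely} the divisors of $2^kc$ that are at most $c$, but your argument only uses that they are divisors, so nothing breaks.
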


We establish Propositions \ref{prop:const1}, \ref{prop:const2a}, and~\ref{prop:const2b} on the way to proving Theorem~\ref{thm:const}.

\begin{proposition}\label{prop:const1}
For any positive integer $c\leq n-1$, there is an arithmetical structure on~$K_n$ with $r_1=c$.
\end{proposition}

\begin{proof}
Let
\[
r_i=\begin{cases}
c&\text{for }i\in\{1,2,\dotsc,n-c\},\\
1&\text{for }i\in\{n-c+1,n-c+2,\dotsc,n\}.
\end{cases}
\]
Then
\[
\sum_{i=1}^n r_i=c(n-c)+c=c(n-c+1).
\]
Since this is divisible by both $c$ and~$1$, we have thus produced an arithmetical structure on~$K_n$.
\end{proof}

Before turning to Propositions \ref{prop:const2a} and~\ref{prop:const2b}, we establish the following lemma.

\begin{lemma}\label{lem:binary}
\begin{enumerate}[label=\textup{(\alph*)},ref=\textup{\alph*}]\thmenumhspace
\item Let $k\in\mathbb{Z}_{\geq0}$ and $\ell\in\mathbb{Z}_{>0}$ with $k\leq\ell$. Every integer~$c$ satisfying $\ell\leq c\leq(\ell-k+1)2^k-1$ can be expressed as $\sum_{j=1}^\ell2^{k_j}$ for some $k_j\in\{0,1,\dotsc,k\}$, where $k_j=0$ for some $j\in\{1,2,\dotsc,\ell\}$. 
\item Let $k\in\mathbb{Z}_{\geq0}$ and $\ell\in\mathbb{Z}_{>0}$ with $k\leq\ell$. Every odd integer~$c$ satisfying $\ell\leq c\leq(\ell-k+2)2^k-3$ can be expressed as $\sum_{j=1}^\ell2^{k_j}$ for some $k_j\in\{0,1,\dotsc,k\}$, where $k_j=0$ for some $j\in\{1,2,\dotsc,\ell\}$. 
\end{enumerate}
\end{lemma}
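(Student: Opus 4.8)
The plan is to prove both parts by induction on $k$, with $\ell$ universally quantified over all integers $\ell\ge k$. I would first reformulate representability: writing $m_i$ for the number of indices $j$ with $k_j=i$, a representation $c=\sum_{j=1}^\ell 2^{k_j}$ with exponents in $\{0,\dots,k\}$ amounts to nonnegative integers with $\sum_{i=0}^k m_i=\ell$ and $\sum_{i=0}^k m_i2^i=c$, and the side condition ``$k_j=0$ for some $j$'' becomes $m_0\ge1$. The engine of both inductive steps is the same reduction: peel off $t$ copies of the top power $2^k$ and represent the remainder $c-t2^k$ using the remaining $\ell-t$ terms with exponents at most $k-1$, invoking the induction hypothesis at level $k-1$ with $\ell-t$ terms. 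As $t$ ranges over its admissible values this produces a family of intervals of representable $c$, and the crux is to show these cover the target range.

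For part (a), the base case $k=0$ is immediate (only $c=\ell$, all terms equal to $1$). For the inductive step, fix $c$ in $[\ell,(\ell-k+1)2^k-1]$ and, for each $t$, let $I_t=[a_t,b_t]$ be the set of $c=t2^k+c'$ with $c'$ representable by $\ell-t$ terms of exponent at most $k-1$ and at least one term equal to $1$; by the induction hypothesis (applicable since $\ell-t\ge k$ throughout) this is exactly the integer interval with $a_t=t(2^k-1)+\ell$ and $b_t=t2^k+(\ell-t-k+2)2^{k-1}-1$. I would then verify two computations: that the top is reached already at $t=\ell-k$, where $b_{\ell-k}=(\ell-k+1)2^k-1$; and that consecutive intervals satisfy $a_{t+1}\le b_t+1$ for $0\le t\le\ell-k-1$, so their union is all of $[\ell,(\ell-k+1)2^k-1]$ with no integer skipped. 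After substituting $s=\ell-t$ the second inequality reduces to $2^{k-1}\ge k$, which holds for every $k\ge1$.

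Part (b) rests on the observation that makes the larger bound possible: since every power $2^i$ with $i\ge1$ is even, the parity of $c=\sum_j2^{k_j}$ equals the parity of $m_0$, so an odd target $c$ automatically forces $m_0$ to be odd, hence positive. Thus for odd $c$ the side condition ``some $k_j=0$'' is free, which is precisely why the upper bound can be pushed up by $2^k-2$. The base cases $k=0$ (empty range) and $k=1$ (direct: with terms in $\{1,2\}$ one has $c=2\ell-m_0$, and any odd $c\le2\ell-1$ comes from an odd $m_0\in[1,\ell]$) start the induction. For $k\ge2$ I would run the same peeling reduction, now invoking part (b) at level $k-1$ to represent the odd remainder $c-t2^k$; the admissible values $t=0,\dots,\ell-k+1$ give intervals $I'_t=[a'_t,b'_t]$ with $a'_t=t(2^k-1)+\ell$ and $b'_t=t2^k+(\ell-t-k+3)2^{k-1}-3$, the top value $b'_{\ell-k+1}$ reaching $(\ell-k+2)2^k-3$.

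The step I expect to be the main obstacle is the covering argument for part (b), because the intervals need only capture the \emph{odd} integers and the analogue of the adjacency inequality gives only $a'_{t+1}\le b'_t+2$ rather than $b'_t+1$, so a naive reading would fear a skipped value. The resolution, which I would present as the heart of the argument, is a parity check: for $k\ge2$ every right endpoint $b'_t=t2^k+(\ell-t-k+3)2^{k-1}-3$ is a sum of even terms minus $3$, hence odd, so whenever a gap of size exactly $2$ occurs the single integer it omits, namely $b'_t+1$, is even and thus irrelevant to the odd targets. Checking that the gap never exceeds $2$ and that all intervals are nonempty are the remaining routine computations, again reducing to elementary inequalities of the form $2^{k-1}\ge k$.
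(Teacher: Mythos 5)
Your proof is correct but follows a genuinely different route from the paper's. The paper proves (a) by induction on $c$ itself: starting from $c=\ell$ with all exponents zero, it argues that any representation of a $c$ strictly below the upper bound must contain a repeated exponent $b<k$, and replacing that pair $2^b+2^b$ by $2^{b+1}+2^0$ raises the sum by exactly $1$ while preserving the zero-exponent condition. For (b) the paper falls back on (a) when $c\leq(\ell-k+1)2^k-1$ and otherwise writes down an explicit representation directly from the binary expansion of the even excess $c'=c-\bigl((\ell-k+1)2^k-1\bigr)$. You instead induct on $k$, peel off $t$ copies of $2^k$, and cover the target range by the resulting family of intervals; I checked your endpoint formulas, the top values $b_{\ell-k}=(\ell-k+1)2^k-1$ and $b'_{\ell-k+1}=(\ell-k+2)2^k-3$, and the adjacency inequalities, which do reduce to $2^{k-1}\geq k$ (with equality at $k=1,2$), and they are all right, as is the parity argument showing that the only integer a gap of size $2$ can omit is $b'_t+1$, which is even for $k\geq2$ and hence irrelevant. (One harmless imprecision: the induction hypothesis gives that the stated interval is \emph{contained in} the representable set, not that it is exactly the representable set, but only the containment is used.) What your approach buys is uniformity --- the same peeling engine drives both parts --- and it isolates the conceptual reason the bound in (b) improves, namely that parity forces $m_0$ to be odd, hence positive, for odd $c$. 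What the paper's approach buys is brevity: the $+1$ perturbation in (a) avoids all interval bookkeeping, and the explicit formula in (b) avoids a second induction.
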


\begin{proof}
To show~(a), we proceed by induction on~$c$. In the base case, $c=\ell$, we can let $k_j=0$ for all~$j$ and have $c=\sum_{j=1}^{\ell}2^{k_j}$. Now suppose $c=\sum_{j=1}^{\ell}2^{k_j}$ for $c\leq (\ell-k+1)2^k-2$. Then $k_j=k$ for at most $\ell-k$ values of~$j$, meaning $k_j<k$ for at least $k$~values of~$j$. If among these values we had each of $0,1,\dotsc,k-1$ only once, we would then have $\sum_{j=1}^{\ell}2^{k_j}=(\ell-k+1)2^k-1>(\ell-k+1)2^k-2$. Therefore there is some $b<k$ for which $k_{j_1}=b=k_{j_2}$ for some $j_1\neq j_2$. Define
\[
k_j'=\begin{cases}
k_j+1&\text{for }j=j_1,\\
0&\text{for }j=j_2,\\
k_j&\text{otherwise.}
\end{cases}
\]
Then $\sum_{j=1}^{\ell}2^{k_j'}=\sum_{j=1}^{\ell}2^{k_j}+1=c+1$. The result follows.

For~(b), first note that if $c\leq (\ell-k+1)2^k-1$ then the result follows from~(a). Therefore, assume $c>(\ell-k+1)2^k-1$ and let $c'=c-((\ell-k+1)2^k-1)$, noting that $c'\leq 2^k-2$. Since $c$ is odd, $c'$ must be even. Therefore $c'$ can be written in the form $\sum_{j=1}^{k-1} s_j2^j$, where each~$s_j$ is either $0$ or~$1$; the~$s_j$ are iteratively determined in reverse by letting $s_j=1$ if $c'-\sum_{i=j+1}^{k-1}s_i2^i\geq2^j$ and letting $s_j=0$ otherwise. Define
\[
k_j=\begin{cases}
0&\text{for }j=1,\\
j-1+s_{j-1}&\text{for }j\in\{2,3,\dotsc,k\},\\
k&\text{for }j\in\{k+1,k+2,\dotsc,\ell\}.
\end{cases}
\]
Then
\[
\sum_{j=1}^{\ell}2^{k_j}=\sum_{j=1}^k 2^{j-1}+\sum_{j=2}^ks_{j-1}2^{j-1}+\sum_{j=k+1}^{\ell}2^k=2^k-1+c'+(\ell-k)2^k=c.\qedhere
\]
\end{proof}

We use Lemma~\ref{lem:binary} to prove Propositions \ref{prop:const2a} and~\ref{prop:const2b}.

\begin{proposition}\label{prop:const2a}
Fix $n\geq 2$. For any positive integer~$k$ satisfying $k+2^k-1\leq n$ and any positive integer~$c$ satisfying $n-2^k+1\leq c\leq (n-k-2^k+2)2^k-1$, there is an arithmetical structure on~$K_n$ with $r_1=c$.\end{proposition}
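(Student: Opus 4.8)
The plan is to set $r_1=c$ together with several additional copies of $c$, and to fill out the remaining entries using the binary-type decomposition of $c$ furnished by Lemma~\ref{lem:binary}(a); the number of extra copies of $c$ is chosen precisely so that the total sum becomes $2^kc$, which is manifestly divisible by every entry.

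First I would set $\ell=n-2^k+1$ and check that this meets the hypotheses of Lemma~\ref{lem:binary}(a). The condition $k\leq\ell$ is equivalent to $k+2^k-1\leq n$, which is assumed (and forces $\ell\geq k\geq1$, so $\ell\in\mathbb{Z}_{>0}$), while the assumed range $n-2^k+1\leq c\leq(n-k-2^k+2)2^k-1$ becomes exactly $\ell\leq c\leq(\ell-k+1)2^k-1$ after substituting $\ell=n-2^k+1$. Hence Lemma~\ref{lem:binary}(a) applies and produces an expression $c=\sum_{j=1}^\ell 2^{k_j}$ with each $k_j\in\{0,1,\dotsc,k\}$ and $k_j=0$ for at least one $j$.

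Next I would define the candidate structure to consist of $2^k-1$ entries equal to $c$, together with the $\ell$ entries $2^{k_1},2^{k_2},\dotsc,2^{k_\ell}$. The total number of entries is $(2^k-1)+\ell=(2^k-1)+(n-2^k+1)=n$, as required, and their sum is
\[
(2^k-1)c+\sum_{j=1}^\ell 2^{k_j}=(2^k-1)c+c=2^kc.
\]
It then remains to verify the defining properties: each copy of $c$ divides $2^kc$; each entry $2^{k_j}$ divides $2^kc$ since $k_j\leq k$; and the entries have no common factor because one of them equals $2^0=1$. Finally, since $c=\sum_{j=1}^\ell 2^{k_j}$ is a sum of positive terms, every term satisfies $2^{k_j}\leq c$, so $c$ is the largest value and thus $r_1=c$.

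The only genuine obstacle is identifying the right construction—in particular, recognizing that the additional entries should be copies of $c$ itself rather than further powers of two, which is what makes the sum land exactly on $2^kc$ and renders every divisibility check immediate. Once the count $2^k-1$ is pinned down by the requirement that the total number of entries equal $n$, the remaining verification is routine.
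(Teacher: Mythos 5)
Your proposal is correct and follows essentially the same approach as the paper's proof: take $2^k-1$ copies of $c$, set $\ell=n-2^k+1$, and fill the remaining $\ell$ entries with the powers of two from Lemma~\ref{lem:binary}(a), so the total is $2^kc$. The only (welcome) addition is that you explicitly verify that $c$ is the largest entry and that the entries are coprime, points the paper leaves implicit.
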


\begin{proof}
Let $r_i=c$ for all $i\in\{1,2,\dotsc,2^k-1\}$. Let $\ell=n-2^k+1$, noting that our assumptions guarantee that $k\leq\ell$ and $\ell\leq c\leq (\ell-k+1)2^k-1$. Lemma~\ref{lem:binary}(a) then shows how to write $c=\sum_{j=1}^{\ell}2^{k_j}$. We use the values $2^{k_j}$, in decreasing order, to define $r_i$ for $i\in\{2^k,2^k+1,\dotsc,n\}$, noting that $r_n=1$. Then
\[
\sum_{i=1}^nr_i=(2^k-1)c+c=2^kc.
\]
Since $2^kc$ is divisible by~$c$ and by $2^{k'}$ for all $k'\in\{0,1,\dotsc,k\}$, we have thus produced an arithmetical structure on~$K_n$.
\end{proof}

Although we imposed the condition $k+2^k-1\leq n$ here to ensure that $k\leq\ell$ in the proof, this does not restrict possible values of~$r_1$, as we show in the proof of Theorem~\ref{thm:const}. Together with Proposition~\ref{prop:const1}, Proposition~\ref{prop:const2a} with $k=1$ allows us to construct arithmetical structures on~$K_n$ with $r_1$ taking any value up to $2n-3$. When $k=2$, the bound is $4n-17$; when $k=3$, the bound is $8n-73$; and when $k=4$, the bound is $16n-289$. If we restrict attention to prime~$r_1$, these bounds can be improved slightly, as the following proposition shows.

\begin{proposition}\label{prop:const2b}
Fix $n\geq2$. For any positive integer~$k$ satisfying $k+2^k-1\leq n$ and any prime number~$p$ satisfying $n-2^k+1\leq p\leq (n-k-2^k+3)2^k-3$, there is an arithmetical structure on~$K_n$ with $r_1=p$.
\end{proposition}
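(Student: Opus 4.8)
The plan is to mirror the proof of Proposition~\ref{prop:const2a} almost verbatim, replacing the appeal to Lemma~\ref{lem:binary}(a) by an appeal to Lemma~\ref{lem:binary}(b). The wider range of validity in part (b) of the lemma is precisely what produces the improved upper bound $(n-k-2^k+3)2^k-3$ in place of $(n-k-2^k+2)2^k-1$. Since Lemma~\ref{lem:binary}(b) applies only to odd integers, the one genuinely new point is to dispose of the prime $p=2$ separately; everything else is a direct transcription of the earlier argument.

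First I would handle the case $p=2$. Here the lower bound $n-2^k+1\leq2$ forces $n\leq2^k+1$, which together with the hypothesis $k+2^k-1\leq n$ gives $k\leq2$. Running through the remaining possibilities ($k\in\{1,2\}$) shows that whenever $2$ actually lies in the stated range one has $n\geq3$, so Proposition~\ref{prop:const1}, applied with $c=2\leq n-1$, already provides an arithmetical structure on $K_n$ with $r_1=2$.

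Now suppose $p$ is an odd prime in the stated range. I would set $r_i=p$ for $i\in\{1,2,\dotsc,2^k-1\}$ and put $\ell=n-2^k+1$. The hypothesis $k+2^k-1\leq n$ is equivalent to $k\leq\ell$, and rewriting the bounds on $p$ in terms of $\ell$ gives exactly $\ell\leq p\leq(\ell-k+2)2^k-3$, using $n-k-2^k+3=\ell-k+2$. Because $p$ is odd, Lemma~\ref{lem:binary}(b) applies and yields an expression $p=\sum_{j=1}^{\ell}2^{k_j}$ with each $k_j\in\{0,1,\dotsc,k\}$ and $k_j=0$ for some $j$. I would then assign the values $2^{k_j}$, in decreasing order, to $r_i$ for $i\in\{2^k,2^k+1,\dotsc,n\}$; the condition $k_j=0$ for some $j$ guarantees $r_n=1$, so the $r_i$ have no common factor, and since each $2^{k_j}\leq p$ (being a summand of $p$), the value $p$ is indeed the largest, giving $r_1=p$.

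Finally, exactly as in Proposition~\ref{prop:const2a}, the total is
\[
\sum_{i=1}^n r_i=(2^k-1)p+p=2^kp,
\]
which is divisible by $p$ and by every power $2^{k'}$ with $k'\in\{0,1,\dotsc,k\}$; hence each $r_i$ divides the sum and we obtain the desired arithmetical structure. The only steps requiring genuine care are the bookkeeping for $p=2$ and the verification that the translated inequalities match the hypotheses of Lemma~\ref{lem:binary}(b) — the substantive combinatorial work having already been carried out in that lemma.
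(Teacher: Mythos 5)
Your proposal is correct and follows essentially the same route as the paper: dispose of $p=2$ via Proposition~\ref{prop:const1}, then for odd $p$ set $\ell=n-2^k+1$, apply Lemma~\ref{lem:binary}(b), and check that the sum $2^kp$ is divisible by every $r_i$. Your explicit verification that $p=2$ in the stated range forces $n\geq3$ is a slightly more careful treatment of a point the paper handles with a parenthetical remark, but the argument is the same.
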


\begin{proof}
If $p=2$ (and $n\geq3$), Proposition~\ref{prop:const1} gives an arithmetical structure on~$K_n$ with $r_1=p$. Therefore suppose $p$ is odd. Let $r_i=p$ for all $i\in\{1,2,\dotsc,2^k-1\}$. Let $\ell=n-2^k+1$, noting that our assumptions guarantee $k\leq \ell$ and $\ell\leq p\leq (\ell-k+2)2^k-3$. Lemma~\ref{lem:binary}(b) then shows how to write $p=\sum_{j=1}^{\ell}2^{k_j}$. As in the proof of Proposition~\ref{prop:const2a}, we use the values $2^{k_j}$, in decreasing order, to define $r_i$ for $i\in\{2^k,2^k+1,\dotsc,n\}$, noting that $r_n=1$. Then
\[
\sum_{i=1}^nr_i=(2^k-1)p+p=2^kp,
\]
which is divisible by~$p$ and by~$2^{k'}$ for all $k'\in\{0,1,\dotsc,k\}$, so therefore we have produced an arithmetical structure on~$K_n$.
\end{proof}

For example, when $k=1$, Proposition~\ref{prop:const2b} allows us to construct arithmetical structures on~$K_n$ with $r_1$ taking prime values as large as $2n-3$. When $k=2$, the bound is $4n-15$; when $k=3$, the bound is $8n-67$; and when $k=4$, the bound is $16n-275$. 

We are now prepared to complete the proof of Theorem~\ref{thm:const}.

\begin{proof}[Proof of Theorem~\ref{thm:const}]
The necessary constructions are given in Propositions \ref{prop:const1}, \ref{prop:const2a}, and~\ref{prop:const2b}. It remains only to show that, for each~$n$, values of~$k$ that maximize the upper bounds in Propositions \ref{prop:const2a} and~\ref{prop:const2b} satisfy $k+2^k-1\leq n$.

The upper bound $(n-k-2^k+2)2^k-1$ in Proposition~\ref{prop:const2a} is linear in~$n$ with slope~$2^k$. A straightforward calculation shows that the bound with slope $2^{k-1}$ coincides with the bound with slope~$2^k$ when $n=k+3\cdot2^{k-1}-1$ and that the bound with slope~$2^k$ coincides with the bound with slope $2^{k+1}$ when $n=k+3\cdot2^k$. Therefore the bound with slope~$2^k$ is maximal exactly when $n$ is between $k+3\cdot2^{k-1}-1$ and $k+3\cdot2^k$. When the bound is maximized, we therefore have that $n\geq k+3\cdot2^{k-1}-1\geq k+2^k-1$, meaning the condition of Proposition~\ref{prop:const2a} is satisfied. This proves~(a).

The argument for~(b) is very similar. The upper bound $(n-k-2^k+3)2^k-3$ in Proposition~\ref{prop:const2b} is maximal for $n$ between $k+3\cdot2^{k-1}-2$ and $k+3\cdot2^k-1$. When the bound is maximized, we then have that $n\geq k+3\cdot2^{k-1}-2\geq k+2^k-1$, meaning the condition of Proposition~\ref{prop:const2b} is satisfied. 
\end{proof}

We conclude this section by giving another construction that allows us to produce some arithmetical structures with values of~$r_1$ other than those guaranteed by Theorem~\ref{thm:const}. 

\begin{proposition}\label{prop:const3}
For any positive integer $k\leq n-1$, there is an arithmetical structure on~$K_n$ with $r_1=k(n-k)+1$. 
\end{proposition}

\begin{proof}
Let
\[
r_i=\begin{cases}
k(n-k)+1&\text{for }i\in\{1,2,\dotsc,k-1\},\\
k&\text{for }i\in\{k,k+1,\dotsc,n-1\},\\
1&\text{for }i=n.
\end{cases}
\]
Then
\[
\sum_{i=1}^n r_i=(k-1)(k(n-k)+1)+k(n-k)+1=k(k(n-k)+1).
\]
Since this is divisible by $k(n-k)+1$, $k$, and~$1$, we have thus produced an arithmetical structure on~$K_n$.
\end{proof}

For example, when $n=13$, Theorem~\ref{thm:const} guarantees that we can find an arithmetical structure on~$K_n$ with $r_1=p$ for all prime $p\leq 37$. By taking $k=5$ in Proposition~\ref{prop:const3}, we can also produce an arithmetical structure with $r_1=41$. By taking $k=6$, we can produce an arithmetical structure with $r_1=43$. The results of this section cannot be extended too much further, as we show in the following section.

\section{Obstruction}\label{sec:obstruction}

We next prove obstruction results that complement our constructions in the previous section. Our first result shows that $r_1$ cannot be a product of primes all of which are too large.

\begin{theorem}\label{thm:genobstr}
Suppose $c\geq2$ is an integer with prime factorization $p_1^{a_1}p_2^{a_2}\dotsm p_k^{a_k}$, where $p_1<p_2<\dotsb<p_k$ and $a_i\geq1$ for all~$i$. If $p_1>(n+1)^2/4$, then there is no arithmetical structure on~$K_n$ with $r_1=c$. 
\end{theorem}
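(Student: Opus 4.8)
The plan is to argue by contradiction: assume there is an arithmetical structure on $K_n$ with $r_1=c$ the largest value, and derive $p_1\le(n+1)^2/4$. Since $r_1=c$ divides $S:=\sum_{i=1}^n r_i$, I would write $S=mc$ for a positive integer $m$; because $r_1$ is largest, $S\le nr_1$ forces $m\le n$, and since the remaining $r_i$ are positive, $m\ge2$. The first useful observation is that $\gcd(m,c)=1$: the inequality $(n+1)^2/4\ge n$ (equivalent to $(n-1)^2\ge0$) gives $p_1>n\ge m$, so every prime factor of $m$ is smaller than the smallest prime factor of $c$ and hence does not divide $c$.

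The core step is a size bound on the values strictly below $c$. Each $r_i$ divides $S=mc$, and since $\gcd(m,c)=1$ we may factor $r_i=\gcd(r_i,m)\cdot\gcd(r_i,c)$ into its $m$-part and its $c$-part. If $r_i<c$, then $\gcd(r_i,c)$ divides $c$ but cannot equal $c$ (otherwise $c\mid r_i$ and $r_i\ge c$), so it is at most the largest proper divisor $c/p_1$, while the $m$-part is at most $m$; therefore $r_i\le mc/p_1$. Writing $a_0$ for the number of indices $i$ with $r_i=c$, I would then sum over all indices to get $mc=S\le a_0 c+(n-a_0)\,mc/p_1$.

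To convert this into the desired bound I need $a_0<m$, which I would obtain from coprimality: if all $n$ values equaled $c$ they would share the common factor $c\ge2$, so at least one value is strictly less than $c$, and since the remaining $n-a_0\ge1$ values contribute positively, $mc=S>a_0c$, giving $m>a_0$. Dividing the displayed inequality by $c$ and rearranging then yields $p_1\le(n-a_0)m/(m-a_0)$. The last step is a short two-variable optimization: for fixed $m$ this bound is increasing in $a_0$, hence largest at $a_0=m-1$, where it equals $(n-m+1)m$, and maximizing $(n-m+1)m$ over $m$ gives $(n+1)^2/4$ (attained near $m=(n+1)/2$). Thus $p_1\le(n+1)^2/4$, contradicting the hypothesis.

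I expect the main obstacle to be identifying the correct quantity to track. A naive split of the $r_i$ into ``large'' and ``small'' values leaves a case (many large values) in which the pure size inequalities become vacuous; the resolution is to count instead the values exactly equal to $c$ and to observe that coprimality forces this count to be strictly less than $m$. This is precisely what allows the optimization to close at the sharp constant $(n+1)^2/4$, and it is the one place where the no-common-factor condition is genuinely used.
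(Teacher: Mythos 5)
Your proposal is correct and follows essentially the same route as the paper's proof: with your $m$ and $a_0$ playing the roles of the paper's $b$ and $m$, you derive the identical key inequality $p_1\le (n-a_0)m/(m-a_0)$ from counting the values equal to $c$, and optimize it to $(n+1)^2/4$. The only differences are cosmetic --- you justify the bound $r_i\le mc/p_1$ for $r_i<c$ via the coprimality $\gcd(m,c)=1$, a detail the paper leaves implicit, and you replace the paper's two-case optimization ($b=m+1$ versus $b\ge m+2$) with a single monotonicity argument in $a_0$.
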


\begin{proof}
Suppose we have an arithmetical structure on~$K_n$ with $r_1=c$. Knowing that $r_1\mid\sum_{i=1}^nr_i$, we define $b=\sum_{i=1}^nr_i/r_1$. Then $\sum_{i=1}^nr_i=bc$, meaning that $r_i\mid bp_1^{a_1}p_2^{a_2}\dotsm p_k^{a_k}$ for all~$i$. Let $m$ be the largest value of~$i$ for which $r_i=c$. For all $i\in\{m+1,m+2\dotsc,n\}$, we have $r_i<c$, which implies that $r_i\leq bp_1^{a_1-1}p_2^{a_2}\dotsm p_k^{a_k}$. This means $\sum_{i=m+1}^nr_i\leq (n-m)bp_1^{a_1-1}p_2^{a_2}\dotsm p_k^{a_k}$. We also have that $\sum_{i=m+1}^nr_i=(b-m)p_1^{a_1}p_2^{a_2}\dotsm p_k^{a_k}$. Therefore
\[
(b-m)p_1^{a_1}p_2^{a_2}\dotsm p_k^{a_k}\leq (n-m)bp_1^{a_1-1}p_2^{a_2}\dotsm p_k^{a_k},
\]
and hence $(b-m)p_1\leq (n-m)b$. When $b=m$, there is only one arithmetical structure on~$K_n$, namely that with $r_i=1$ for all~$i$, so the desired structure cannot arise in this case. Therefore we assume $b>m$, in which case we have
\[
p_1\leq\frac{(n-m)b}{b-m}.
\]

When $b=m+1$, this gives $p_1\leq(n-b+1)b$. It is a simple calculus exercise to show this bound is maximized when $b=(n+1)/2$. It follows that $p_1\leq (n+1)^2/4$.

When $b\geq m+2$, we have that
\[
p_1\leq \frac{(n-m)b}{b-m}=\frac{nb-mb+nm-nm}{b-m}=n+\frac{m(n-b)}{b-m}\leq n+\frac{m(n-m-2)}{2}.
\]
It is a simple calculus exercise to show this bound is maximized when $m=n/2-1$, so therefore 
\[
p_1\leq n+\frac{(n/2-1)(n/2-1)}{2}=\frac{n^2}{8}+\frac{n}{2}+\frac{1}{2}=\frac{(n+1)^2}{4}-\frac{n^2-1}{4}\leq\frac{(n+1)^2}{4}.
\]

The result follows.
\end{proof}

If we restrict attention to arithmetical structures where $r_1$ is a prime number, then Theorem~\ref{thm:genobstr} can be improved to Theorem~\ref{thm:obstr}. The general outline of the proof is similar, with some of the bounds improved.

\begin{theorem}\label{thm:obstr}
If $p$ is a prime number with $p>n^2/4+1$, then there is no arithmetical structure on~$K_n$ with $r_1=p$.
\end{theorem}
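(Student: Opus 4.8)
The plan is to follow the proof of Theorem~\ref{thm:genobstr}, specializing to the case $c=p$ prime, where the divisibility conditions become far more rigid. As before, I would suppose we have a structure with $r_1=p$ and set $b=\sum_{i=1}^n r_i/p$, so that $\sum_{i=1}^n r_i=bp$ and each $r_i\mid bp$. Since $p$ is prime, each $r_i$ is either divisible by $p$---in which case $r_i=p$, as $r_i\leq r_1=p$---or coprime to $p$, in which case $r_i\mid b$. Letting $m$ be the largest index with $r_i=p$, this sharpens the estimate $r_i\leq bc/p_1$ from the proof of Theorem~\ref{thm:genobstr} to the cleaner statement that $r_i\mid b$, and in particular $r_i\leq b$, for every $i>m$.

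From $\sum_{i>m}r_i=(b-m)p$ and the bound $r_i\leq b$ on the $n-m$ summands, I again obtain $(b-m)p\leq(n-m)b$; the case $b=m$ is excluded exactly as in Theorem~\ref{thm:genobstr}, so $b>m$ and $p\leq(n-m)b/(b-m)$. When $b\geq m+2$, the same computation as in Theorem~\ref{thm:genobstr} yields $p\leq n+m(n-m-2)/2\leq(n+2)^2/8$, and since $(n+2)^2/8\leq n^2/4+1$ is equivalent to $(n-2)^2\geq0$, this case already lies within the desired bound.

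The main obstacle is the remaining case $b=m+1$, for which the argument above gives only $p\leq(n-m)(m+1)$, optimizing to the weaker value $(n+1)^2/4$; here I must use primality in an essential way. In this case $\sum_{i>m}r_i=p$, each of the $n-m$ summands divides $M:=m+1$, we have $M\geq2$ (since $m\geq1$), and, because each summand is strictly less than $p$, there must be at least two summands. The key point is that the summands cannot all equal their maximal value $M$ (that would make $p=(n-m)M$ composite), and any summand that is a \emph{proper} divisor of $M$ drops by at least $M/2$. I plan to record this as a deficit argument: writing $D=(n-m)M-p=\sum_{i>m}(M-r_i)$, if $p$ were to exceed $(n-m-1)M+1$ then $D\leq M-2<M$, which forces at most one summand to be a proper divisor of $M$. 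Ruling out zero proper divisors (which makes $p$ composite) and one proper divisor $c$ (where $c\mid M$ forces $c\mid p$, hence $c=p$, contradicting $c<p$) then shows $p\leq(n-m-1)M+1=(m+1)(n-m-1)+1$.

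Finally, I would optimize: since $(m+1)(n-m-1)=(m+1)\bigl(n-(m+1)\bigr)\leq n^2/4$ by the arithmetic--geometric mean inequality, this case also gives $p\leq n^2/4+1$. Combining the three cases establishes the contrapositive of the theorem. The crux lies entirely in the $b=m+1$ deficit estimate; the other cases transfer directly from Theorem~\ref{thm:genobstr} once the improved observation $r_i\mid b$ is in hand.
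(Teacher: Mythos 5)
Your proposal is correct and follows essentially the same route as the paper's proof: the same reduction to $r_i\mid b$ for $i>m$, the same three-way split on $b$ versus $m$ (with $b\geq m+2$ handled exactly as in Theorem~\ref{thm:genobstr}), and the same key inequality $p\leq(n-m-1)b+1$ in the critical case $b=m+1$, optimized identically. The only difference is cosmetic: where the paper forces the smallest entry to equal $1$ via the gcd condition after observing that all but one summand equals $b$, your deficit count rules out a proper-divisor summand $c\geq2$ using primality of $p$; both mechanisms hinge on a proper divisor of $b$ being at most $b/2$ and yield the identical bound.
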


\begin{proof}
If $p=2$, the hypothesis of the theorem is only satisfied for $n=1$, and there is no arithmetical structure on~$K_1$ with $r_1=2$. Suppose we have an arithmetical structure on~$K_n$ with $r_1=p$, where $p\geq3$. Knowing that $r_1\mid\sum_{i=1}^nr_i$, we define $b=\sum_{i=1}^nr_i/r_1$. Then $\sum_{i=1}^nr_i=bp$, meaning that $r_i\mid bp$ for all~$i$. Let $m$ be the largest value of~$i$ for which $r_i=p$. We can only have $b=m$ if $r_i=1$ for all~$i$, but then $r_1$ is not prime. We consider two cases: when $b=m+1$ and when $b\geq m+2$.

Case I: $b=m+1$. For all $i\in\{m+1,m+2,\dotsc,n\}$, we have that $r_i\mid bp$ and $r_i<p$, so therefore $r_i\mid b$. Whenever $r_i<b$, this means $r_i\leq b/2$. If $r_{n-1},r_n<b$, we would then have that $\sum_{i=m+1}^nr_i\leq (n-m-1)b$. If instead $r_i=b$ for all $i\in\{m+1,m+2,\dotsc,n-1\}$, we would have that $r_n\mid r_i$ for all $i\in\{m+1,m+2,\dotsc,n\}$. Since $\sum_{i=m+1}^n r_i=(b-m)p=p$, this would also mean $r_n\mid p$, and hence that $r_n\mid r_i$ for all~$i$. Therefore we would need to have $r_n=1$, meaning that $\sum_{i=m+1}^nr_i\leq (n-m-1)b+1$. Regardless of the value of $r_{n-1}$, we thus have that 
\[
p=\sum_{i=m+1}^nr_i\leq (n-m-1)b+1=(n-b)b+1=nb-b^2+1.
\]
It is a simple calculus exercise to show that this bound is maximized when $b=n/2$. Hence we have that $p\leq n^2/4+1$.

Case II: $b\geq m+2$. 
We have that $r_i\leq b$ for all $i\in\{m+1,m+2,\dotsc,n\}$ and $\sum_{i=m+1}^nr_i=(b-m)p$, so therefore $(b-m)p\leq (n-m)b$. As in the proof of Theorem~\ref{thm:genobstr}, this yields that 
\[
p\leq \frac{(n-m)b}{b-m}\leq n+\frac{m(n-m-2)}{2}, 
\]
and this bound is maximized when $m=n/2-1$. Therefore
\[
p\leq n+\frac{(n/2-1)(n/2-1)}{2}=\frac{n^2}{8}+\frac{n}{2}+\frac{1}{2}
=\frac{n^2}{4}+1-\frac{(n-2)^2}{8}<\frac{n^2}{4}+1.
\]

We have thus shown that in all cases we must have $p\leq n^2/4+1$. 
\end{proof}

For even~$n$, we can choose $k=n/2$ in Proposition~\ref{prop:const3} and get an arithmetical structure on~$K_n$ with $r_1=n^2/4+1$. For odd~$n$, we can choose $k=(n-1)/2$ and get an arithmetical structure on~$K_n$ with $r_1=(n^2-1)/4+1$. As some of these values of~$r_1$ are prime, the bound in Theorem~\ref{thm:obstr} therefore cannot be improved.

There are arithmetical structures for which $r_1$ takes composite values larger than the bound given in Theorem~\ref{thm:obstr}. For instance, the example in the opening paragraph of this paper gives an arithmetical structure on~$K_5$ with $r_1=105$. 

\section{Prime \texorpdfstring{$r_1$}{r\_1}}

This section considers the possible prime values $r_1$ can take in an arithmetical structure on~$K_n$. Theorem~\ref{thm:const}(b) guarantees that $r_1$ can take any prime value up to $2^kn-(k+2^k-3)2^k-3$ for any~$k$. Theorem~\ref{thm:obstr} says that $r_1$ cannot take any prime value larger than $n^2/4+1$. These bounds are not too far from each other. The function $n^2/4+1$ has linear approximations of the form $2^kn-2^{2k}+1$. When $k$ is $1$ or~$2$, this linear approximation coincides with the bound from Theorem~\ref{thm:const}(b). In general, it differs from this bound by $(k-3)2^k+4$.

Proposition~\ref{prop:const3} shows that $r_1$ can take some of the prime values in the gap between the bound of Theorem~\ref{thm:const}(b) and the bound of Theorem~\ref{thm:obstr}. We can check by hand whether it can take other prime values; to illustrate how to do this, we explain why there is no arithmetical structure on~$K_{18}$ with $r_1=79$. Suppose there were such a structure, and let $b=\sum_{i=1}^{18}r_i/r_1$, so that $\sum_{i=1}^{18}r_i=79b$. Let $m$ be the largest value of~$i$ for which $r_i=79$. Then $\sum_{i=m+1}^{18}r_i=79(b-m)$. For all $i\in\{m+1,m+2,\dotsc,18\}$, we have that $r_i\mid 79b$ and $r_i<79$. Therefore $r_i\mid b$, and hence $r_i\leq b$. This means that $\sum_{i=m+1}^{18}r_i\leq(18-m)b$, so we must have $79(b-m)\leq (18-m)b$. If $b\geq m+2$, we would have that 
\[
61b-79m+mb\geq61(m+2)-79m+m(m+2)=(m-8)^2+58>0,
\]
which would imply that $79(b-m)>(18-m)b$. Therefore we cannot have $b\geq m+2$. Since $b=m$ is only possible if $r_1=1$, it therefore remains to consider whether we can have $b=m+1$. In this case, we would have $\sum_{i=m+1}^{18}r_i\leq (18-m)(m+1)$. This bound is less than~$79$ except when $m$ satisfies $6\leq m\leq 11$. If $m=6$, we would need to have $12$~divisors of~$7$ that sum to~$79$, but this is not possible. If $m=7$, we would need to have $11$~divisors of~$8$ that sum to~$79$, but this is not possible. If $m=8$, we would need to have $10$~divisors of~$9$ that sum to~$79$, but this is not possible. If $m=9$, we would need to have $9$~divisors of~$10$ that sum to~$79$, but this is not possible. If $m=10$, we would need to have $8$~divisors of~$11$ that sum to~$79$, but this is not possible. If $m=11$, we would need to have $7$~divisors of~$12$ that sum to~$79$, but this is not possible. Therefore there is no arithmetical structure on~$K_{18}$ with $r_1=79$. A similar approach can be used to either find arithmetical structures with other prime values of~$r_1$ or to show that they do not exist. We have done this for all $n\leq27$; the results are shown in Table~\ref{tab:prime}.

\begin{table}
\begin{tabular}{cccccc}
\toprule
$n$&Yes, Thm.~\ref{thm:const}(b)&No, Thm.~\ref{thm:obstr}&Yes, Prop.~\ref{prop:const3}&Yes, other&No, other\\
\midrule
3&$p\leq3$&$p>3.25$&&&\\
4&$p\leq5$&$p>5$&&&\\
5&$p\leq7$&$p>7.25$&&&\\
6&$p\leq9$&$p>10$&&&\\
7&$p\leq13$&$p>13.25$&&&\\
8&$p\leq17$&$p>17$&&&\\
9&$p\leq21$&$p>21.25$&&&\\
10&$p\leq25$&$p>26$&&&\\
11&$p\leq29$&$p>31.25$&$31$&&\\
12&$p\leq33$&$p>37$&$37$&&\\
13&$p\leq37$&$p>43.25$&$41,43$&&\\
14&$p\leq45$&$p>50$&&$47$&\\
15&$p\leq53$&$p>57.25$&&&\\
16&$p\leq61$&$p>65$&&&\\
17&$p\leq69$&$p>73.25$&$71,73$&&\\
18&$p\leq77$&$p>82$&&&$79$\\
19&$p\leq85$&$p>91.25$&$89$&&\\
20&$p\leq93$&$p>101$&$97,101$&&\\
21&$p\leq101$&$p>111.25$&$109$&$103,107$&\\
22&$p\leq109$&$p>122$&$113$&&\\
23&$p\leq117$&$p>133.25$&$127,131$&&\\
24&$p\leq125$&$p>145$&&$127,131,137,139$&\\
25&$p\leq133$&$p>157.25$&$137,151,157$&$139,149$&\\
26&$p\leq141$&$p>170$&&$149,151,157,163$&$167$\\
27&$p\leq149$&$p>183.25$&$163,181$&$151,157,167,173$&$179$\\
\bottomrule
\end{tabular}
\caption{Possible prime~$r_1$ in arithmetical structures on~$K_n$ for $n\leq 27$.\label{tab:prime}}
\end{table}

We conclude by noting that, on~$K_{27}$, there is no arithmetical structure with $r_1=179$ whereas there is an arithmetical structure with $r_1=181$. This shows that there is not a cutoff function $f(n)$ such that, for each~$n$, there is an arithmetical structure on~$K_n$ with $r_1=p$ for all primes $p\leq f(n)$ and no such structure for any $p>f(n)$. Therefore, while one could attempt to improve the bound of Theorem~\ref{thm:const}(b), the possible prime values of~$r_1$ cannot be fully explained by a result of this form.

\section*{Acknowledgments}
We would like to thank Nathan Kaplan for a helpful conversation and Darleen Perez-Lavin for helpful comments on a previous version of this paper. The first author was supported by a Niagara University Undergraduate Student Summer Support Grant.

\bibliographystyle{amsplain}
\bibliography{CompleteGraphs}

\end{document}